\documentclass[10pt]{amsart}
\usepackage{amsmath, amssymb, amsthm}
\usepackage{tikz}
\usetikzlibrary{arrows.meta}
\usetikzlibrary{bayesnet}
\usepackage{booktabs}

\theoremstyle{plain}
\newtheorem{theorem}{Theorem}[section]
\newtheorem{definition}[theorem]{Definition}
\newtheorem{lemma}[theorem]{Lemma}
\newtheorem{remark}[theorem]{Remark}

\newcommand{\R}{\mathbb{R}}
\newcommand{\E}{\mathbb{E}}
\newcommand{\bp}{\mathbf{p}}
\newcommand{\bZ}{\mathbf{Z}}
\newcommand{\bX}{\mathbf{X}}
\newcommand{\bC}{\mathbf{C}}
\newcommand{\old}{\text{old}}
\newcommand{\new}{\text{new}}
\newcommand{\Yhat}{\widehat{Y}}
\newcommand{\Zhat}{\widehat{Z}}
\newcommand{\Uhat}{\widehat{U}}

\newcommand{\thetaold}{\theta^\old}
\newcommand{\balph}{\mathbf{\alpha}}

\DeclareMathOperator{\argmax}{argmax}
\DeclareMathOperator{\Dir}{Dir}

% Title and author
\title{Probabilistic Fitting of Topological Structure to Data}
\author{James T. Griffin}

\begin{document}

\begin{abstract}
  We define a class of probability distributions that we call \emph{simplicial mixture models}, inspired by simplicial complexes from algebraic topology.  The parameters of these distributions represent their topology and we show that it is possible and feasible to fit topological structure to data using a maximum-likelihood approach.
  We prove under reasonable assumptions that with a fixed number of vertices a distribution can be approximated arbitrarily closely by a simplicial mixture model when using enough simplices.
  Even if the topology is not of primary interest, when using a model that takes the topology of the data into account the vertex positions are good candidates for archetype/endmember vectors in unmixing problems.
\end{abstract}

% \begin{keywords}
%   simplex,
%   mixture models,
%   topological data analysis,
%   maximum likelihood
% \end{keywords}
% \begin{AMS}
%   05E45, %  Combinatorial aspects of simplicial complexes
%   55U10, % 	Simplicial sets and complexes
%   68P30, %  Coding and information theory (compaction, compression, models of communication, encoding schemes, etc.)
%   60B99, %  None of the above, but in this section (Probability theory on algebraic and topological structures)
%   94A08, %  Image processing (compression, reconstruction, etc.)
% \end{AMS}

\maketitle

\section{Introduction}
If it is reasonable to assume that a dataset can be generated by a simple process then it is appropriate to fit a simple model to that dataset, perhaps by using the mean, covariance matrix or other statistics.  However if the dataset is generated by a complicated process then it may have non-trivial topology.  Topological data analysis (TDA) provides tools for studying that topology~\cite{carlsson2009topology}, but as of yet there are no known methods for fitting topological structure to the distribution of the data.

The motivating problem we wish to solve is this: given observed data in $\R^n$ that we assume has been independently sampled from a distribution with inherent but unknown topology, how does one find a candidate topological structure modelling the data and how does one compare it with other candidates?
We wish to solve the problem in a probabilistic way: the model and its topology should be specified by the parameters of a distribution and by maximising the likelihood of the observed data over all possible parameters we should reveal the topology of the data.

There are many methods to fit geometric objects to data, for example a plane can be fitted to a cloud of data by total least squares minimisation, or data can be clustered around chosen points by fitting a Gaussian mixture model.  However many datasets exhibit more complicated topology not captured by these simple geometries.  This topology is studied in topological data analysis (TDA)~\cite{carlsson2009topology}, but current methods only estimate invariants of the topology.
More complicated objects can be fitted using a generative topographic mapping (GTM)~\cite{bishop1998gtm} which in theory could fit any choice of metric space, however this metric space must be specified in advance and the fitting will be poor unless the embedded space is initialised close to the final fit.
There appears to be a general principle at work, the more complicated the model, the more complicated the likelihood function on its parameter space and the more challenging it is to fit.

The \emph{simplicial mixture model} proposed in this paper can fit complicated geometry to data without the need to specify the topology in advance.  Moreover we observe that fitting with randomly initialised parameters is effective.  In algebraic topology the simplicial complex containing all possible simplices is homotopy equivalent to the simplest possible geometric object, a point.  By weighting the set of all possible simplices, and allowing simplices that do not represent the data to fade we gain the advantage of fitting a simple geometric object but the ability to express a wide range of topologies.

In TDA there are already methods of assigning simplicial complexes to data, most notably the witness complex~\cite{desilva2004topological}.  This relies on a scale parameter and the behaviour of its homological invariants as the scaling parameter changes defines the persistent homology.
However from our perspective this is not a fitting of a simplicial complex both because there is not a natural choice of scaling parameter and because it does not take into account the varying density of the distribution of the data.  Its purpose is to compute invariants of the support of the distribution, which is assumed to be some interesting subset of~$\R^n$.

% In Section~\ref{section_related} we
In Section~\ref{section_smm} we give the full definition of a simplicial mixture model and in Section~\ref{section_applications} discuss two applications of our techniques to datasets: fitting topology to hand-written digits and then unmixing of an image into a number of channels.
In Section~\ref{section_degeneracy} we study distributions arising from simplices and prove our main Theorem: that simplicial mixture models can be used to approximate distributions on a convex hull of vertices arbitrarily closely.
We discuss further connections between our methods and other work in Section~\ref{section_discussion} and offer our conclusions in Section~\ref{section_final}.

In Appendix~\ref{section_LEMM} the mathematical underpinning of fitting a simplicial mixture model to observed data is presented.  It is convenient to study a more general class of model, linearly embedded mixture models. We derive an expectation-maximisation (EM) algorithm in Section~\ref{section_lemm_EM}.
In Appendix~\ref{section_mcmc} we discuss our implementation of a stochastic EM algorithm using Markov chains.

All of the code used to generate our results is available at~\cite{griffin2019smm}.  Readers are encouraged to download the code and experiment for themselves.

\section{Simplicial Mixture Models}
\label{section_smm}
To represent topology we use simplices, a generalisation of points, edges and triangles.  A \emph{geometric $k$-simplex} is the convex hull of $k+1$ points in general position, so a $0$-simplex is a point, a $1$-simplex is an edge, a $2$-simplex is a triangle, etc.
% More complicated spaces are built from a set of simplices which may share common vertices.
% Given a set of vectors $v_1,\cdots,v_m$, one can join them together by supplying a list of simplicies, each $k$-simplex is specified by its $(k+1)$ corner vertices $v_{i_0},\cdots,v_{i_k}$ or equivalently by the indices $i_0,\cdots,i_k$.

Roughly speaking, a simplicial mixture model is a weighted collection of simplices which have vertices from a common set $\{v_1,\cdots,v_m\}$ for some~$m$ and vectors $v_i\in\R^n$.  We now make the definition precise.  A \emph{combinatorial $k$-simplex} $S$ is a multi-set of $k+1$ vertices in $\{1,\cdots,m\}$.  By sorting the $k+1$ vertices a multi-set is represented by an increasing sequence $i_0\leq\cdots\leq i_k$. Alternatively, counting the number of each vertex yields non-negative integers $\balph = (\alpha_1,\cdots,\alpha_m)$ that sum to~$k+1$.
If any of the vertices are repeated we call the simplex $S$ \emph{degenerate}.  We call the set of vertices that occur at least once in $S$ the \emph{support of~$S$}. Write $A_k(m)$ for the set of all combinatorial $k$-simplices on $m$ vertices.

% In algebraic topology a \emph{simplicial complex} $A$ on $m$ vertices is a set $A$ of subsets of $\{1,\cdots,m\}$, which must be closed under inclusion, i.e. if $S\in A$ and $T\subset S$ then $T\in A$.  Each subset is a non-degenerate combinatorial simplex.  However we allow vertices to be repeated and have no closure requirements.
% We call a set of combinatorial simplices a \emph{simplicial model}.
  % The repetition of vertices allows for non-uniform distributions on the convex hull.

Each simplex $S=(i_0\leq\ldots\leq i_k)$ defines a random vector $U_S\in\R^m$ by sampling uniformly from the standard probability simplex $\Delta^k \subset \R^{k+1}$, then applying the linear map sending $e_j\in\R^{k+1}$ to $e_{i_j}$, we call the resulting random vector $U_S$.
To sample uniformly from the standard simplex one can take $k+1$ independent samples from the exponential distribution, $\mu_0,\cdots,\mu_k$ then normalise the results by letting $u_i = \mu_i / \sum_{j=0}^{k+1}\mu_j$.
Alternatively one can sample $k$ points $\nu_1,\ldots,\nu_k$ from the uniform distribution on $[0,1]$, sort them, pad the now increasing sequence at each end by~0 and~1 respectively and then take the differences between neighbouring entries in the sequence. Using either method gives a uniform distribution on the simplex~$\Delta^k$.

\begin{definition}\label{def_smm}
Let $V$ be an $n\times m$ matrix and $\bp$ be a probability distribution on~$A_k(m)$. The \emph{simplicial mixture model} with parameters $(\bp, V)$ is the random vector in $\R^n$ defined by
\begin{equation}\label{eq_XeqVUC}
  X = VZ = VU_C,
\end{equation}
where $C$ is the discrete random variable on $A_k(m)$ defined by $\bp$ and where $Z=U_C$ is the mixture of random vectors $U_S\in\R^m$ indexed by~$C$.
\end{definition}

\begin{remark}
The choice to use the set $A_k(m)$ of simplices is driven by the theory presented in Section~\ref{section_degeneracy}.  In particular, even though $A_k(m)$ and $A_{k+1}(m)$ are disjoint sets, the family of models for $A_k(m)$ is a proper subset of the family of models based on~$A_{k+1}(m)$.
\end{remark}

\begin{figure}
  \begin{centering}
  \includegraphics{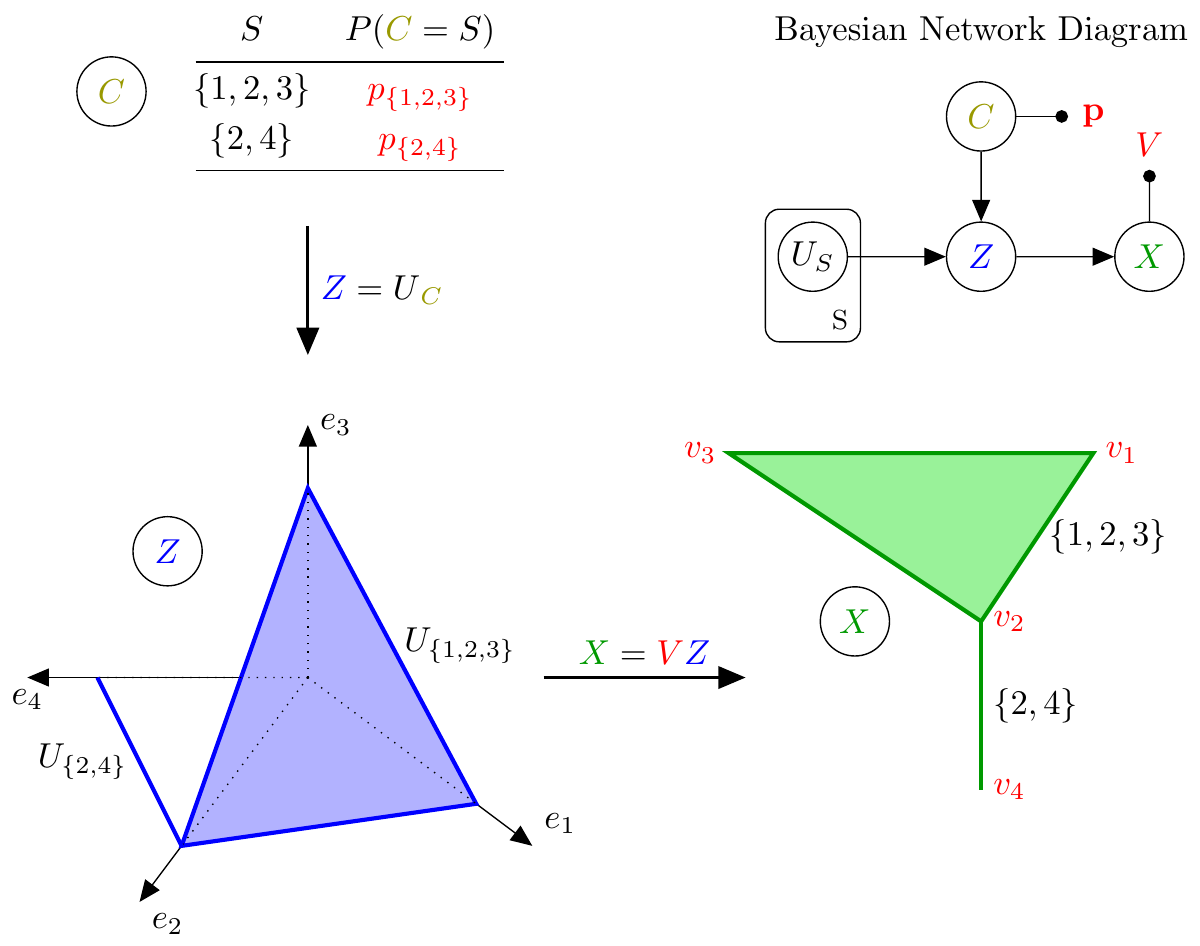}\\
  \end{centering}
  \definecolor{mustard}{rgb}{0.6,0.6,0.0}
  \definecolor{mygreen}{rgb}{0.0,0.6,0.0}
  \caption{A diagram illustrating a simplicial mixture model with $m=4$ vertices in $n=2$ dimensions supported on two simplices.  The parameters are in \textcolor{red}{red}.  Top right: The Bayesian network shows the dependency of the random variables and parameters for a general simplicial mixture model.  Top left: a table describing $\textcolor{mustard}{C}$, a discrete latent variable with support the two combinatorial simplices.  Bottom left: $\textcolor{blue}{Z}$ is a continuous latent variable in~$\R^m$.  Bottom right: the output variable $\textcolor{mygreen}{X}$~in $\R^n$.}
  \label{fig_intro}
\end{figure}

It does not make sense to directly apply a maximum likelihood approach to fitting the parameters $(\bp,V)$ to observed data because $X$ does not necessarily have a probability density function. So to fit the parameters $(\bp,V)$ we introduce a new random vector $X + N(0, \Sigma)$ for some covariance matrix $\Sigma$, then maximise the log-likelihood over the trio~$(\bp, V, \Sigma)$.
Of course this is not possible to maximise directly, but an iterative approach is possible using the expectation-maximisation~(EM) algorithm.  The algorithm and its derivation is presented in Appendix~\ref{section_lemm_EM}.  However there is a complication: for simplices of dimension~2 or greater the expected values required in the EM~algorithm can not be calculated directly, so must be estimated; a stochastic EM algorithm is presented in Appendix~\ref{section_mcmc}.

% There is also a question of how to select the number of feature vectors, which we address in Section~\ref{section_encodingrate}.
% It is not feasible to calculate or even estimate the log-likelihood for simplices of dimension~2 or greater.
% So we introduce the intrinsic encoding rate, a function of the parameters~$(\bp,V,\Sigma)$, which is a measure of the amount of information required to encode the data set along with associated samples from the latent variables.

For models with only 1-dimensional simplices the log-likelihood of parameters can be used to judge the fit of a simplicial mixture model to data.  However this will typically favour complicated models over simpler models.  It is also not always feasible to compute or even estimate the log-likelihood when the simplices are of dimension greater than one.
As an alternative to the log-likelihood we suggest the \emph{intrinsic encoding rate},
\begin{equation}\label{eq_encodingrate}
  h_R(\bp,V,\Sigma) = H(\bp) +
    \sum_{S\in A} p_S R_{VU_S}(\Sigma) + \frac12\log\left[(4\pi e)^n\det(\Sigma)\right],
\end{equation}
which can be used to measure the fit of a model when the parameters $(\bp,V,\Sigma)$ are yielded by the EM algorithm.  A lower rate is associated to a better fit.
Here $H(\bp)$ is the entropy of the distribution~$\bp$, the expressions $R_{VU_S}(\Sigma)$ are rate distortion functions associated to each simplex $S$ and the final expression is the differential entropy of the multivariate normal distribution~$N(0,2\Sigma)$.

The intrinsic encoding rate is a measure of the amount of information required to encode a typical vector $x\sim X + N(0,\Sigma) = VU_C + N(0,\Sigma)$ by a supplying a triple $(S, z, x)$.  The entropy term is the encoding rate for~$S\in C$; the second term is the expected amount of information required to express $z\in Z$ given $S\in C$ such that $VZ$ has an accuracy depending on~$\Sigma$; and the final term is the amount of information required to encode the difference $x - Vz$ under the hypothesis that $x-Vz$ is distributed as a normal distribution with mean~$0$ and covariance~$2\Sigma$.
As the latent variables must be encoded the intrinsic encoding rate favours simpler models.  And as the differences between the data and the latent variable~$X-VZ$ must be encoded the rate favours small covariances~$\Sigma$.
The derivation will be explained further in Appendix~\ref{section_encodingrate}.

\section{Example applications}
\label{section_applications}
To demonstrate that the fitting of simplicial mixture models behaves as one might intuitively expect we present an example of fitting models to hand-written digits.  Then for a single digit we observe the range of outputs of the fitting algorithm with different random initialisations of the parameters.

Following this we demonstrate an application to image analysis.  A simplicial mixture model can take advantage to the geometry and topology of the distribution of colours in an image to unmix the image into multiple channels, or layers.  In this case the topology of the space of colours is only of secondary interest, but the fitted vertex positions (in this case colours) form a palette and for each pixel the conditional distribution of the latent variable~$Z$ describes the mixing of the palette colours.
The geometry of the space of colours was used in a similar way in~\cite{tan2017decomposing}.

\subsection{Fitting models to hand-written digits}
\label{section_MNIST}
\begin{figure}
  \begin{centering}
  \includegraphics{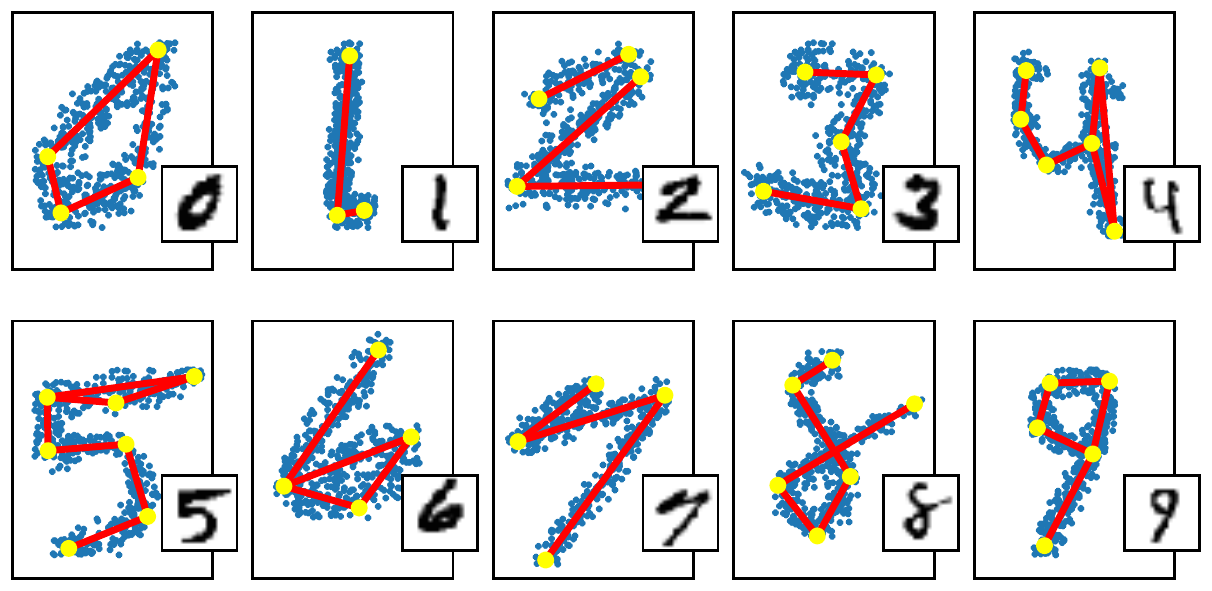}\\
  \end{centering}
  \caption{The result of fitting a simplicial mixture model to each of 10 digits taken from the MNIST database~\cite{mnist}.  The 1-simplices (edges) with 5\% or greater of the probability mass are plotted in red, the vertex positions are plotted in yellow and the blue points are random samples from the inset raster image.  Observe that for the most part the lines follow an angular fit to the stroke of the pen.
  The details of how the models were fitted are given in Section~\ref{section_MNIST}.}
  \label{fig_10digits}
\end{figure}

A hand-written digit has a simple topology determined by the path of the pen as it was drawn, so it is natural to fit a simplicial mixture model using the set of 1-simplices~$A_1(m)$, see~\cite{hinton1992adaptive} for a similar approach using a single spline.
We fit a number of models for different values of $m$.
The results of treating a grayscale image as a distribution and then fitting a simplicial mixture model to this distribution are presented in Figure~\ref{fig_10digits}.
The fitted simplices follow the strokes of the pen as one would expect.

From each image 500 samples were taken, each sampled by choosing a pixel with probability propositional to its intensity, then sampling uniformly from the bounds of the pixel.
For each of the ten digits $d$ and each choice of $m=3,\cdots, 7$, the vertex positions were initialised by picking $m$ sample points at random from the distribution. This was repeated 10 times for each~$(d,m)$.
Then 40 steps of the EM algorithm were applied to each initialisation to give 10 different candidate parameters for each~$(d,m)$ and the parameters with the lowest intrinsic encoding rate were chosen for each pair.  An additional 500 steps of the EM algorithm were applied to give an optimised model for each pair~$(d,m)$ and then for each $d$ the model with the smallest intrinsic encoding rate was chosen.

\begin{figure}
  \begin{centering}
  \includegraphics{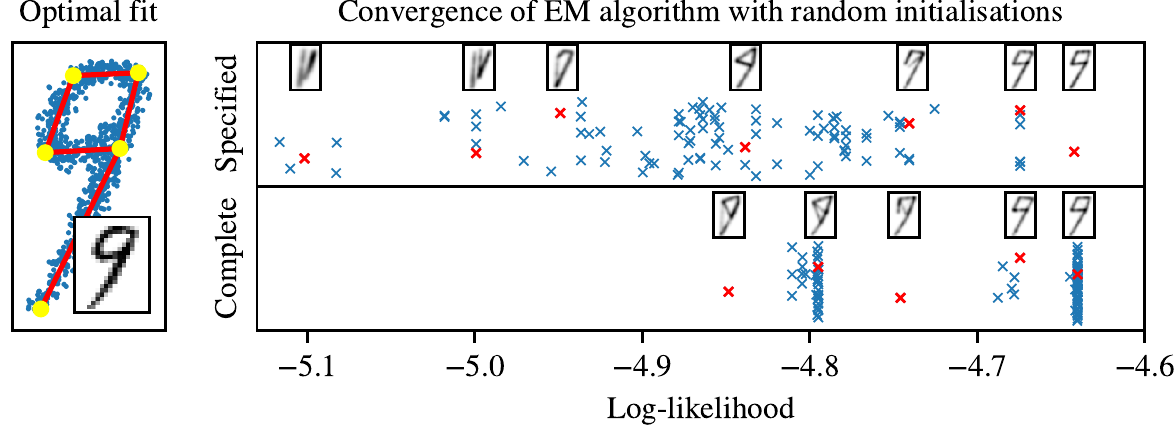}\\
  \end{centering}
  \caption{
  We demonstrate the range of results for different initialisations of the EM algorithm.
  Left: the model with the greatest log-likelihood amongst all fittings with $m=5$ vertices is shown in red, the blue points are the samples from the image used to fit the model.  The inset is the original hand-written digit.
  Right: the log-likelihoods are plotted along the $x$-axis for the results of applying the EM algorithm for 200 random initialisations of the vertex positions.  To aid in visualisation the $y$-values are randomly offset.
  The top half of the plot shows 100 initialisations where the optimal graph structure is pre-defined. The bottom half shows initialisations with all possible edges. The insets show the probability density functions for a sample of the models.
  For the complete graph there appear to be two prominent local maxima and out of the 100 initialisations,~33 find themselves in the range $-4.8\pm 0.02$ and~59 find themselves in the range $-4.64\pm 0.02$.
  For the pre-defined graph there are many more local maxima and the values of the log-likelihoods are spread out in a large range.}
  \label{fig_insideno9}
\end{figure}

For each of the 10 digits we plotted only the best fit from multiple initialisations as judged by the intrinsic encoding rate. We now focus on a single digit, fix the number of vertices at $m=5$, then investigate the range of results obtained with different random initialisations.
We also investigate the difference between starting with a specified set of edges or starting without any prior knowledge with the complete graph, by fitting models of both types to the chosen digit.
Since the number of vertices is constant we use the log-likelihood to judge the quality of fitting, see Figure~\ref{fig_insideno9} for the results.

For the specified model we observe many different values of the log-likelihood in a large range corresponding to many local maxima of differing quality.  The range of quality is not altogether surprising because although we specified the edges of the graph we randomly initialise the vertex positions.
For the unspecified model with the complete graph the results were very different with two main values observed.  In the optimum fit four of the vertices are used to describe the loop of the `9' with a single leaf edge describing the tail.  In the secondary fit only three vertices describe the loop while the extra vertex is used to better represent the curve of the tail.  There are further values obtained but the regions around the two most prominent values accounts for over 90 of the 100 runs.

The details of how the figure was generated are as follows.
For $m=5$ vertices the optimum graph fitting the chosen digit `9' consists of a square with an additional leaf attached.
For both this graph and the complete graph we used the EM algorithm to fit a simplicial mixture model with 100 different random initialisations.
For each run 500 steps of the algorithm were used and the resulting log-likelihoods were plotted to give an indication of the distribution of local maxima.
With the complete graph the log-likelihoods occur around relatively few values, with 59 out of 100 occuring in a small range of the maximum observed log-likelihood.
Whereas for the pre-defined graph only one out of 100 was in this range and many local maxima were found for a larger range of values.

\subsection{Unmixing of images}
\label{section_endmember}
By making an assumption on the data we can use a simplicial mixture model to unmix the data into a linear combination of vertex positions.
We assume that for a small set of unknown vectors the data are formed taking a linear mixture of a subset of those vectors.  The coefficients of that mixture should sum to~1, so the data is in the convex hull of the subset of vectors.
In the case of an image this assumes that there is a palette of colours and that each pixel is formed by combining some colours from the palette.

In spectral unmixing as applied in geology~\cite{bioucas2012hyperspectral}, the spectrum of a geological sample is assumed to be a mixture of spectra of pure minerals.  These minerals are called endmembers.  When the spectra of the endmembers are unknown the problem of infering the endmembers from sample spectra is called endmember extraction.
A similar approach is taken in archetypal analysis~\cite{cutler1994archetypal}, where data are again assumed to be linear combinations of certain archetypes.  However in archetype analysis the archetypes themselves are assumed to be linear combinations of observed data.

In a simplicial mixture model the variable $X=VZ$ is a mixture of vertex positions $V$ using the latent variable $Z$ as mixing coefficients.
Hence by fitting a simplicial mixture model to data the vertex positions $V$ give candidate endmember vectors, whilst for each datapoint the estimated latent variable $Z$ gives mixing coefficents between those endmembers.
In applications it is reasonable to assume that the distribution of the mixing values is not uniform, perhaps due to a physical reason why two endmembers cannot coexist, or perhaps two features can occur individually but are more likely to coexist.
So we expect that using a model that allowed for highly non-uniform distributions would perform well in both finding endmembers/archetypes and the associated mixing coefficients.

Usually the maximum number of endmembers is taken to be one greater than the number of components of the data so that the function mapping the simplex of mixing coefficients onto the convex hull of the endmembers is injective allowing the mixing coefficients to be uniquely determined.  However if a highly non-uniform distribution is modelled then a Bayesian approach can infer the mixing probabilities even when the number of features is two or more larger than the dimension of the data.
A geometric way to interpret this is that data can have a complicated geometry which a simplicial mixture model can fit.  The latent variable $Z$ specifies the position of data within this geometry as a combination of only a few feature vectors and hence as a sparse vector with a large number of zero components.

We tested this use of a simplicial mixture model by fitting a model to an image, viewing the RGB values of each pixel as a datapoint.
In Figure~\ref{fig_unmixing}, an image of an iris plant is unmixed from its original three RGB channels to seven channels.  The simplicial mixture model uses the set $A_3(7)$ of all possible 3-simplices (tetrahedra).

Observe in particular that channel~6 corresponds to the pigment of the petals, channel~4 to light that has scattered through the plant while channel~2 is pure white where the image is oversaturated.

\begin{figure}
  \begin{centering}
  \includegraphics{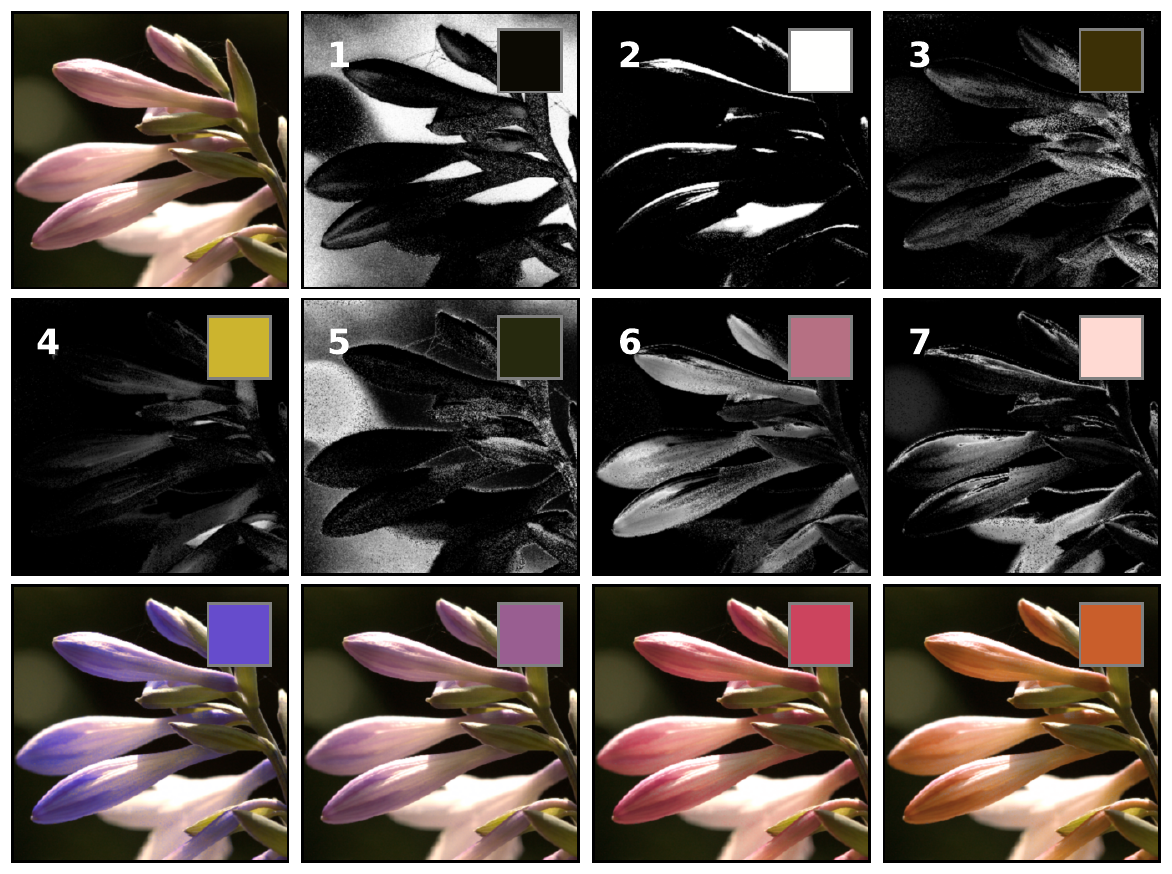}\\
  \end{centering}
  \caption{An image (top-left) is unmixed into 7 channels (numbered) demonstrating the method of using a simplicial mixture model to take advantage of the geometry of a dataset.
  In the bottom row the original image is recoloured by changing channel~6.
  The simplicial mixture model is based on $A_3(7)$ the set of all possible 3-simplices on 7 vectors and then taking the expected value of the conditional distribution of the latent variable~$Z$ given each pixel value.  Each of the 7 plots is the intensity of a single channel. The coloured box for each channel represents the colour for that channel. The original image was taken from the MIT-Adobe database~\cite{MIT2011fivek}.}
  \label{fig_unmixing}
\end{figure}

Since the dimension of the simplices is greater than~1 it is not possible to apply the EM algorithm directly and we used a Markov chain Monte Carlo based stochastic version of the EM algorithm.  A set of 32041 pixels were taken from the image to fit the model.  The vertex positions were initialised at random by sampling from the pixels, however to ensure that the 7 samples were spread out they were iteratively resampled. We chose and removed one of the 7 at random, then from 100 candidate pixels chose the pixel furthest away from the remaining 6 pixels.  This was repeated 100 times.
From this initialisation we used the stochastic EM algorithm with a total of 3000 maximisations.  The full description of the algorithm is given by expression~\eqref{eq_regime} after the stochastic algorithm is introduced in Appendix~\ref{section_mcmc}.

Remarkably even though each pixel is specified by only three coordinates its position within the geometry of the cloud of pixels has allowed it to be unmixed to seven coordinates.  The representation by these seven numbers will typically be sparse, meaning that all but a few will be close to zero.

\section{The approximation theorem}
\label{section_degeneracy}
Theorem~\ref{thm_approximation} states that ``any bounded distribution on $\R^n$ can be approximated arbitrarily closely by a simplicial mixture model with~$n+1$ vertices.''  In this section we study the underlying density functions associated to the distributions $U_S$ and their mixtures.  We then prove Lemma~\ref{lem_spanproperties} which justifies the choice of $A_k(m)$ to index mixtures of simplices.  Finally we describe a form of kernel density estimation on the simplex which we use to prove the approximation theorem.

Intuitively a simplicial mixture model allows the fitting of discrete combinatorial objects constructed from simplices, as shown in Figures~\ref{fig_intro} and~\ref{fig_10digits}.  Under this intuition, to closely represent an arbitrary distribution would require many vertices, using them to discretise the distribution of the data.  However the intuition is incomplete when degenerate simplices are included.
The degenerate simplices can be to perform kernel density estimation.  As the dimension of the degenerate simplices grows, the kernels get smaller and the approximation closer.

A degenerate $k$-simplex embedded in $\R^n$ is much like a regular simplex, but where some of the vertices coincide.
In the extreme case, the $k$-simplex $(i\leq \cdots \leq i)$ defines a constant random variable at vertex~$i$.
The 2-simplex $(1 \leq 1 \leq 2)$ is a triangle but where two vertices coincide. For intuition consider a non-degenerate triangle where the vertices $v_1,v_2,v_3\in\R^2$ are in general position, but vertices $v_1$ and $v_2$ are very close.  The associated random variable is uniform on the triangle, but since $v_1$ and $v_2$ are close it looks much like a thickened line where the thickness of the line varies linearly down to zero at~$v_3$.
So when the points $v_1$ and $v_2$ coincide the result is a distribution with support the line segment joining $v_1=v_2$ and $v_3$, but with the density on the line varying linearly; a triangular distribution.

We now consider a general $k$-simplex $S$. For convenience we assume without loss of generality that the vertex set is $\{0,\cdots,m\}$ with labelling starting at~0 and that the support is this whole set, if not then restrict to the subspace of $\R^{m+1}$ spanned by vectors in the support. Under this assumption the distribution $U_S$ is the well known Dirichlet distribution with parameters $\balph=(\alpha_0,\cdots,\alpha_m)$, where $\alpha_j$ is the number of vertices in $S$ equal to $j$, which by the assumption on the support is strictly greater than~$0$.
This is proved by first noting that from the definition the non-degenerate simplex $(1,\cdots,1)$ gives the uniform distribution on the simplex, which agrees with the corresponding Dirichlet distribution.  The result for degenerate simplices then follows after noting the following classical property of Dirichlet distributions, if you sample a point $(p_0,\cdots,p_m)$ from $\Dir(\alpha_0,\cdots,\alpha_m)$ then sum the first two coordinates, the result $(p_0+p_1,p_2,\cdots,p_m)$ is distributed according to $\Dir(\alpha_0+\alpha_1,\alpha_2,\cdots,\alpha_m)$.

We assume the natural Lesbegue measure on the simplex $\Delta^m\subset\R^{m+1}$, but suitably scaled so that the volume of the simplex is~$\tfrac1{m!}$. Then the density function of $U_S$ is expressed via a monomial,
\begin{equation}\label{eq_USrho}
  \rho_{U_S}(z_0,\cdots,z_m) = \frac1{B(\balph)} \prod_{j=0}^m z_j^{\alpha_j-1}.
\end{equation}
Here $B$ is the multivariate beta function,
\begin{equation}
B(\alpha) = \frac{\prod_{j=0}^m\Gamma(\alpha_j)}{\Gamma(\sum_{j=0}^m\alpha_j)}
\end{equation}
and $\Gamma(n) = (n-1)!$.
Hence the $k$-simplices with support $\{0,\cdots,m\}$ are in bijection with the degree $k-m$ monomials in $z_0,\cdots,z_m$.

The functions we consider have support on the simplex $\Delta^m$, which means that $z_0 +\cdots+z_m = 1$.  So the algebra of polynomial functions on $\Delta^m$ is the quotient ring
\begin{equation}
R_m = \R[z_0,\cdots,z_{m}] / (z_0+\cdots+z_m-1).
\end{equation}
Write $R_m^+$ for the convex subset of elements which may be expressed as polynomials with non-negative coefficients which integrate to~1 on $\Delta^m$, this parametrises the finite mixtures of distributions $U_S$ for simplices $S$ with full support.
The ring $R_m$ is isomorphic to the polynomial ring $\R[z_0,\cdots,z_{m-1}]$ via the formula $z_m = 1-z_0-\cdots-z_{m-1}$. In particular the formula $z_0+\cdots+z_m = 1$ implies that the mixture of the $m+1$ degenerate $(m+1)$-simplices is the uniform distribution.  So the probability density functions of the simplices with a fixed support are not linearly independent.
Fortunately the linear dependence is very simple and has some convenient benefits.
Let $B_k(m+1)\subset A_k(m+1)$ be the subset of $k$-simplices with support $\{0,\cdots,m\}$ and write $R_{m,l}$ for the span of degree $l$ monomials in $R_m$, i.e. the span of the distribution functions of simplices in~$B_{k-m}(m)$.
\begin{lemma}
\label{lem_spanproperties}
\begin{enumerate}
  \item
    For every $k$-simplex $S$ and $k'>k$, the density function $\rho_{U_S}$ is equal to a mixture of density functions $\rho_{U_{S'}}$ for $k'$-simplices $S'$.  Hence $R_{m,l} \leq R_{m,l'}$ for $l\leq l'$.
  \item
    The density functions $(\rho_{U_S})_{S\in B_k(m)}$ are linearly independent, so form a basis for~$R_{m,k-m}$.
\end{enumerate}
\end{lemma}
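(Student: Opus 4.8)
My plan is to treat both statements entirely at the level of the monomial representatives of the densities inside $R_m$, using only the defining relation $z_0+\cdots+z_m=1$ and the elementary identity $\Gamma(t+1)=t\,\Gamma(t)$ for the beta-function bookkeeping. Throughout I assume full support (restricting to the span of the support otherwise, as in the preamble), so that a $k$-simplex $S$ with multiplicities $\balph=(\alpha_0,\dots,\alpha_m)$, $\sum_j\alpha_j=k+1$, has density $\rho_{U_S}=\tfrac1{B(\balph)}\prod_j z_j^{\alpha_j-1}$, a positive scalar multiple of the degree $k-m$ monomial $z^{\balph-\mathbf{1}}$.

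For part (1) I would first do the single step $k'=k+1$ and then iterate. Multiplying the monomial by $1=\sum_i z_i$ on $\Delta^m$ gives
\[
  z^{\balph-\mathbf{1}}=\sum_{i=0}^m z_i\, z^{\balph-\mathbf{1}}=\sum_{i=0}^m z^{(\balph+e_i)-\mathbf{1}},
\]
and each summand equals $B(\balph+e_i)\,\rho_{U_{S_i}}$, where $S_i$ is the $(k+1)$-simplex with multiplicities $\balph+e_i$ (vertex $i$ duplicated). Dividing by $B(\balph)$ and using $B(\balph+e_i)/B(\balph)=\alpha_i/(k+1)$ yields
\[
  \rho_{U_S}=\sum_{i=0}^m \frac{\alpha_i}{k+1}\,\rho_{U_{S_i}}.
\]
The coefficients are non-negative and sum to $1$, so this is a genuine mixture; composing such steps handles arbitrary $k'>k$, and passing to linear spans gives $R_{m,l}\le R_{m,l'}$ for $l\le l'$. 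I would remark that this recursion is exactly the law of total probability for a $\Dir(\balph)$ prior under a single categorical observation, the weight $\alpha_i/(k+1)$ being the prior predictive probability of category $i$ and $\Dir(\balph+e_i)$ the corresponding posterior.

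For part (2) the span is immediate from the definition of $R_{m,k-m}$, so only linear independence needs proof. Suppose a linear combination $\sum_S c_S\,\rho_{U_S}$ over $S\in B_k(m)$ vanishes identically on $\Delta^m$. Clearing the positive constants $1/B(\balph)$, this is a single \emph{homogeneous} polynomial $P$ of degree $l=k-m$ vanishing on $\Delta^m$. By homogeneity, every point of the open positive orthant is a positive scalar multiple of a point in the relative interior of $\Delta^m$, so $P$ vanishes on that open set and is therefore the zero polynomial; since distinct monomials are linearly independent in $\R[z_0,\dots,z_m]$, every $c_S=0$.

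The step that needs the most care is precisely this last one, since it appears to sit in tension with the earlier observation that the fixed-support densities are \emph{not} linearly independent. The resolution, which I would state explicitly, is that the offending relation $z_0+\cdots+z_m=1$ is \emph{inhomogeneous} and hence can only couple monomials of \emph{different} total degree; restricting to a single degree $l$ removes it, and the homogeneity/scaling argument is exactly what isolates that one graded piece. Once this distinction is pinned down, the remainder is routine.
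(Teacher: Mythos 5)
Your proposal is correct; the interesting comparison is in part (2). For part (1) you take essentially the paper's route — multiplying by powers of $z_0+\cdots+z_m$, which is $1$ on $\Delta^m$ — except that you do one factor at a time and compute the weights explicitly as $\alpha_i/(k+1)$ via $B(\balph+e_i)/B(\balph)=\alpha_i/(k+1)$, thereby verifying outright that the combination is a genuine mixture (non-negative weights summing to $1$); the paper compresses this into ``by rescaling the coefficients,'' where non-negativity is clear and the sum-to-one property follows by integrating both sides over $\Delta^m$. Your Dirichlet--multinomial reading of the weights is a nice bonus that foreshadows the kernel construction used later in the approximation theorem. For part (2) you genuinely diverge: the paper eliminates $z_m$ through the isomorphism $R_m\cong\R[z_0,\cdots,z_{m-1}]$, identifies $R_{m,k-m}$ with the polynomials of degree at most $k-m$ in $m$ variables, and deduces independence from the bijection (dehomogenization) between degree-$(k-m)$ monomials in $z_0,\cdots,z_m$ and monomials of degree at most $k-m$ in $z_0,\cdots,z_{m-1}$. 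You instead argue directly in the ambient polynomial ring: a vanishing linear combination is a single homogeneous polynomial of degree $k-m$ vanishing on $\Delta^m$, scaling spreads the vanishing over the open positive orthant, hence the polynomial is identically zero and all coefficients vanish. Both arguments are valid. Yours is more self-contained and isolates exactly why the known dependence $z_0+\cdots+z_m=1$ is harmless — it is inhomogeneous and can only couple distinct degrees — a point the paper never makes explicit; the paper's isomorphism, on the other hand, has the side benefit of exhibiting $R_{m,l}$ concretely as a space of bounded-degree polynomials, which is what makes the ensuing filtration $R_{m,0}\leq R_{m,1}\leq\cdots$ and its exhaustion of $R_m$ transparent.
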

\begin{proof}
  For part 1), multiply the monomial $\prod_{j=0}^m z_j^{\alpha_j-1}$ associated to~$S$ by
  \begin{equation}
  (z_0+\cdots+z_m)^{k'-k}
  \end{equation}
  and expand to give a sum involving monomials corresponding to $k'$-simplices.   Within the algebra of functions on $\Delta^m$ this is the same as multiplying by~1. So we have an equation, equating the original monomial with a sum of monomials.  By rescaling the coefficients this is easily rewritten as an equation involving the density functions~\eqref{eq_USrho}.

  For the linear independence, observe that under the isomorphism of $R_m$ with $\R[z_0,\cdots,z_{m-1}]$ the span $R_{m,k-m}$ of the monomials of total degree $k-m$ in $R_m$ is isomorphic to the subspace of polynomials in $z_0,\cdots,z_{m-1}$ of degree less than or equal to $k-m$. But the monomials in $z_0,\cdots,z_{m-1}$ of degree less than or equal to $k-m$ are in bijection with the monomials in $z_0,\cdots,z_m$ of degree equal to $k-m$, so the monomials are linearly independent in $R_m$.
\end{proof}
These two properties mean that there is a infinite sequence of inclusions, i.e. a filtration
\begin{equation}
R_{m,0} \leq R_{m,1}\leq R_{m,2} \leq \cdots
\end{equation}
whose union is $R_m$ itself.  The significance of this filtration is that as $k$ is increased, the family of distributions of mixtures of $k$-simplices are nested within each other.  The same comment applies to simplicial mixture models.  So by fitting simplicial mixture models with successively higher dimensional simplices one gains successively better approximations to a true distribution.

To prove Theorem~\ref{thm_approximation} we will describe a form of kernel density estimation on the simplex~$\Delta^m$.
% We will now show that distributions may be approximated arbitrarily closely.
Suppose that $Y$ is a distribution on $\Delta^m$ and let $l\geq 1$ be a natural number.  Consider the following Markov chain
\begin{equation}\label{eq_KMarkov}
\begin{tikzpicture}%[>=Stealth]
  % \node[draw, circle, inner sep=1pt] (Y) at (0,0) {$\strut Y$};
  % \node[draw, circle, inner sep=1pt] (A) at (4,0) {$\strut A$};
  % \node[draw, circle, inner sep=1pt] (Yh) at (8,0) {$\strut\Yhat_l$};
  %
  % \path (Y) edge[->] node[above] {$\alpha\sim\text{MulNom}(l,y)$} (A);
  % \path (A) edge[->] node[above] {$\widehat{y}\sim \text{Dir}(\alpha+1)$} (Yh);
  \node[latent] (Y) at (0,0) {$Y$};
  \node[latent] (A) at (4,0) {$A$};
  \node[latent] (Yh) at (8,0) {$\Yhat_l$};

  \path (Y) edge[->] node[above] {$\alpha\sim\text{MulNom}(l,y)$} (A);
  \path (A) edge[->] node[above] {$\widehat{y}\sim \text{Dir}(\alpha+1)$} (Yh);
\end{tikzpicture}
\end{equation}
where $A$ is given by drawing a sample $\alpha=(\alpha_0,\cdots,\alpha_m)$ from the multinomial distribution with multiplicity parameter~$l$ and distribution parameter $y$ drawn from~$Y$.  Then given $\alpha$ from $A$, $\Yhat_l$ is drawn from the Dirichlet distribution with parameters $(\alpha_j+1)_{j=0}^m$.

The output $\Yhat_l$ is a mixture of integer-valued Dirichlet distributions, whose parameters sum to $l+m$, so $\Yhat_l$ should be thought of as an approximation to $Y$ belonging to a finite dimensional family of distributions.
The mixture $\Yhat_l$ may be thought of as the result of applying a kernel density estimate to $Y$ where information is lost by sampling~$l$ points from each finite distribution $y\sim Y$ and then infering a posterior distribution for $y$ with the prior of a Dirichlet distribution $\text{Dir}(1,\cdots,1)$.
% The estimate lies in the finite dimensional space $R_{m,l}$.
By increasing $l$ less information is lost.

For each $l$ define a kernel by marginalising out~$A$,
\begin{equation}
K_l(y, \widehat{y}) = P(\widehat{y}\mid y) =
  \sum_{\balph\mid \sum_{j=0}^m\alpha_j = l}
  \binom{l}{\alpha} \prod_{j=0}^my_j^{\alpha_j} \,\,\,
  \frac1{B(\balph+1)} \prod_{j=0}^m \widehat{y}_j^{\alpha_j}
\end{equation}
This is a symmetric function.  For any distribution $(Y,\mu)$ on $\Delta^m$ the distribution $\Yhat_l$ has the probability density function
\begin{equation}
\rho_{Y,l}(\widehat{y}) =
  \int_{\Delta^l} K_l(\widehat{y},y)d\mu(y) =
  \sum_\alpha \binom{l}{\alpha}\E_Y \Bigl(\prod_{j=0}^my_j^{\alpha_j}\Bigr)
  \frac1{B(\alpha+1)} \prod_{j=0}^m \widehat{y}_j^{\alpha_j}.
\end{equation}
% So $\Yhat_l$ is a mixture of distributions defined by $(l+m)$-simplices, where the coefficients are multiples of central moments of~$Y$.
\begin{theorem}\label{thm_approximation}
    The sequence $\Yhat_l$ converges weakly to the original distribution~$Y$.
    In particular the space $R^+_m$ of mixtures of Dirichlet distributions with integer parameters is dense in the space of all distributions on $\Delta^m$ with respect to weak convergence.

    Therefore any bounded distribution on $\R^n$ can be approximated arbitrarily closely by a simplicial mixture model with~$n+1$ vertices.
\end{theorem}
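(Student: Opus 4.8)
The plan is to treat the three assertions as a single chain: the weak convergence $\Yhat_l \Rightarrow Y$ is the analytic engine, the density of $R_m^+$ is its immediate corollary, and the statement about distributions on $\R^n$ is obtained by transporting the simplex result through an affine embedding. I would establish the final ``Therefore'' clause assuming the first assertion, and sketch that first assertion separately, since it carries the real content.

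For the weak convergence I would use that $\Delta^m$ is compact, so by Stone--Weierstrass it suffices to check that $\E\bigl[\prod_j \Yhat_{l,j}^{\beta_j}\bigr] \to \E_Y\bigl[\prod_j y_j^{\beta_j}\bigr]$ for every multi-index $\beta$. Conditioning on $\alpha$, the Dirichlet moment identity gives $\E\bigl[\prod_j \Yhat_{l,j}^{\beta_j} \mid \alpha\bigr] = B(\alpha+1+\beta)/B(\alpha+1)$, which for fixed $\beta$ behaves like $\prod_j (\alpha_j/l)^{\beta_j}$ up to a factor tending to $1$. Averaging over $\alpha \sim \text{MulNom}(l,y)$ and using that the normalised counts $\alpha/l$ have moments converging to those of the point $y$ yields convergence to $\prod_j y_j^{\beta_j}$ for each $y$; a final integration against $\mu$ (dominated convergence, the integrands being bounded on $\Delta^m$) gives the claim. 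Equivalently, conditional on $y$ the posterior $\Dir(\alpha+1)$ has mean $(\alpha_j+1)/(l+m+1) \to y_j$ and variance $O(1/l)$, so its law collapses to $\delta_y$ and weak convergence follows. This is the step I expect to be the main obstacle, since it requires controlling the posterior concentration uniformly enough to pass to the limit under the integral; the Bayesian reading (larger sample size $l$ means a sharper posterior) is exactly what makes it work.

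To deduce the final statement, let $\mu$ be a bounded distribution on $\R^n$. Since its support is bounded I may choose $n+1$ affinely independent points $v_0,\dots,v_n \in \R^n$ whose convex hull $T$ contains that support, and assemble them as the columns of an $n \times (n+1)$ matrix $V$. Affine independence makes $V$ injective on the standard simplex, so $z \mapsto Vz$ restricts to a homeomorphism $\phi \colon \Delta^n \xrightarrow{\;\sim\;} T$. Pulling $\mu$ back through $\phi$ gives a distribution $Y = (\phi^{-1})_*\mu$ on $\Delta^n$, to which I apply the density statement with $m = n$: there exist $\Yhat_l \in R_n^+$, i.e. mixtures of the integer-parameter Dirichlet distributions $U_S$, with $\Yhat_l \Rightarrow Y$.

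Finally I push forward. Because $\phi_* U_S = V U_S$ is exactly the single-simplex component of Definition~\ref{def_smm}, and for fixed $l$ all component simplices share the dimension $k = l+n$, the pushforward $\phi_* \Yhat_l$ is precisely a simplicial mixture model $V U_C$ on $n+1$ vertices, with $\bp$ the mixture weights of $\Yhat_l$ on $A_{l+n}(n+1)$. Since $\phi$ is continuous, pushing forward preserves weak convergence, so $\phi_* \Yhat_l \Rightarrow \phi_* Y = \mu$. Hence $\mu$ is approximated arbitrarily closely in the weak topology by simplicial mixture models with $n+1$ vertices, as claimed. The only points needing care here are that boundedness is what guarantees an enclosing simplex, and that it is weak approximation, not approximation of densities, that we obtain, consistent with the fact that $\mu$ itself need not admit a density.
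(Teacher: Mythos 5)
Your proposal is correct, and its overall architecture matches the paper's: first show that the kernel construction collapses onto $Y$ on the simplex, then transport the result through the affine map $V$ onto the convex hull of $n+1$ enclosing vertices. The one genuine difference is the engine for the first step. The paper reduces to a point mass $Y=\delta_y$ and explicitly computes the mean and covariance of $\Yhat_l$ (splitting the covariance of the mixture as mean-of-covariances plus covariance-of-means, both $O(1/l)$), which gives collapse to $\delta_y$; your primary route instead certifies weak convergence by convergence of all moments plus Stone--Weierstrass on the compact simplex, via the Dirichlet moment identity $B(\alpha+1+\beta)/B(\alpha+1)$ and multinomial moment convergence. These are close cousins (second moments versus all moments), and you even include the paper's version as your ``equivalently'' aside. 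What your write-up buys is rigour at three points the paper glosses over: the dominated-convergence passage from point masses to general $Y$ (the paper merely asserts sufficiency of the point-mass case), the homeomorphism $\Delta^n\cong T$ needed so that $\mu$ genuinely pulls back to a distribution $Y$ with $V_*Y=\mu$, and the observation that all components of $\Yhat_l$ share the common simplex dimension $k=l+n$, so that $V\Yhat_l$ is literally a simplicial mixture model on $A_{l+n}(n+1)$ in the sense of Definition~\ref{def_smm}. What the paper's route buys is elementarity: two explicit moment computations and no appeal to a density theorem. One small imprecision on your side: the conditional moment ratio equals $\prod_j(\alpha_j/l)^{\beta_j}$ plus a uniform \emph{additive} $O(1/l)$ error, not a multiplicative ``factor tending to $1$'' (consider $\alpha_j=0$); the conclusion is unaffected.
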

\begin{proof}
  If is sufficient to prove the first part for $Y$ a constant point distribution, i.e. that for a fixed value $y=(y_j)\in\Delta^m$, the kernels $\Yhat_l = K_l(y, \widehat{y})$ converge weakly to the constant distribution at~$y$.  So it is sufficient to show that the sequence of means tends to $y$ and that the sequence of covariance matrices tends to~$0$.
  This follows from direct calculation, for the $i$th coordinate of the mean of~$\Yhat_l$ we have
  \begin{align}
    \E \Yhat_{l,i}
    &= \sum_\alpha \binom{l}{\alpha} \prod_j y_i^{\alpha_i} \E \Dir(\alpha+1)
     = \sum_\alpha \binom{l}{\alpha} \prod_j y_i^{\alpha_i} \frac{\alpha_i+1}{l+m}\\
    &= \frac1{l+m} \frac{d}{dy_i}\Bigl[ y_i \bigl(\sum_i y_i\bigr)^l\Bigr]
     = \frac1{l+m} \bigl(\sum_iy_i\bigr)^l + \frac{l}{l+m} y_i\bigl(\sum_iy_i\bigr)^{l-1}\\
    &= \frac1{l+m}(ly_i + 1),
  \end{align}
  so the means converge to $y$ as $l\rightarrow\infty$.
  The approximation $\Yhat_l$ is a mixture of distributions and the covariance matrix of a mixture is the covariance of the means plus the mean of the covariances.
  The magnitude of each entry of the covariance matrix of a Dirichlet distribution whose parameters sum to $l+m$ is bounded above by $\tfrac1{l+m}$, and hence this applies to the mixture of the covariance matrices too.  So it remains to show that the covariance matrix of the means tends to~$0$.
  The $ij$th entry is
  \begin{equation}
    \text{Cov}(\E\Dir(\alpha+1)_i,\E\Dir(\alpha+1)_j)
    = \sum_\alpha\binom{l}{\alpha}\prod_i y_i^{\alpha_i}
      \frac1{(l+m)^2}(\alpha_i - ly_i)(\alpha_j - ly_j)
  \end{equation}
  which tends to~$0$ by standard properties of the multinomial distribution.

  The final part of the Theorem follows immediately after noting that if a distribution $X$ on $\R^n$ has bounded support then one can choose $n+1$ vertex positions such that the support of $X$ is contained in the convex hull of the vertices.  Let the vertex vectors be arranged as the columns of a matrix~$V$.  This allows the distribution to be pulled back, $X=VY$ to a distribution $Y$ on the standard $n$-simplex.  Then the sequence $\Yhat_l$ converges weakly to~$Y$ and the sequence of simplicial mixture models $V\Yhat_l$ converges weakly to $VY=X$.
\end{proof}
If we were to restrict to non-degenerate simplices there would be a total of $2^{n+1}$ simplices, so the possible distributions are limited, hence the degenerate simplices are required.

Although this result shows that distributions can be well represented by mixtures of simplex distributions with large enough simplex dimensions, the vertex positions are not being used.
Simplicial mixture models combine the fitting of vertex positions to represent important points in geometry along with the ability to approximate distributions in their convex hull.

\section{Discussion}
\label{section_discussion}
Now that we have given example applications of simplicial mixture models and have explained some of their theoretical properties we will discuss some links with existing methods from the literature.

A generative topographic mapping (GTM)~\cite{bishop1998gtm} describes the fitting of a geometric object to data using an expectation-maximisation (EM) algorithm.  The geometric object must be defined in advance, whereas our topological objects do not need to be pre-defined.
However the factorisation of the mapping into latent variables and a linear projection is common to both methods and the theory in Appendix~\ref{section_LEMM} also applies to GTMs.

We discussed endmember analysis in Section~\ref{section_endmember}.
The fitting of a single simplex to data has been approached from many angles, see~\cite{bioucas2012hyperspectral} for a review.
In particular the Bayesian approach~\cite{parra2000unmixing} is similar to our methods when restricting to a single simplex with a uniform distribution.

In non-negative matrix factorisation (NMF) a matrix of data is factorised into the product of a typically sparse matrix of non-negative values and another matrix~\cite{hoyer2004non}.  In our model we similarly have $X=VZ$, where $Z$ will be sparse.  The main difference is that a simplicial mixture model inherently takes into account the distribution of the set of supports of the sparse vectors.  Of course this comes at a computational cost.

Methods from algebraic topology have been applied in data analysis~\cite{carlsson2009topology}, the main methods are persistent homology and the mapper algorithm.  These methods compute invariants associated to the data: homology and the Reeb graph respectively.
We may hope that a simplicial mixture model captures a representation of the topology of a dataset.  In the case of the hand-written digits we plotted the 1-simplices with over~5\% of the probability mass and obtained embedded graphs representing the digits.
However this not only excludes weak simplices but also small simplices.  For more complicated examples, in particular those with higher dimensional and degenerate simplices, the challenge of extracting combinatorial information from the probabilities~$\bp$ on the simplices will be harder.

A close look at Figure~\ref{fig_10digits} reveals another issue, consider the figure `8'; the fitted model is actually a chain of line segments, so topologically a line.  The embedding defined by the vertex positions causes the line to follow the stroke of the hand-written digit.  Taking only the combinatorial topology we have not found the topology of the digit, but instead of a line parametrising the digit.
The combinatorial topology within a simplicial mixture model is encoded within the latent variable~$C$ and the geometric version of this topology is associated with the latent variable~$Z\in\R^m$.
This is only related to the original dataset by the linear map~$V:\R^m\rightarrow\R^n$.

\section{Conclusions}
\label{section_final}
In this article we have introduced a probabilistic method to fit topological structures to data and via two examples have demonstrated its viability.
We have proved the theoretical result that simplicial mixture models can approximate bounded distributions arbitrarily closely.

We have also argued that these methods could be of practical use outside of the field of topological data analysis, in particular to problems studied in endmember and archetype analysis.
Future work should describe applications to higher dimensional real-world datasets alongside a comparison to established methods.

Also required is further study into the properties of fitted models, in particular whether the method of fitting a simplicial mixture model to simulated or well-understood data reliably recovers known ground truths.

% Any implementation that requires the enumeration of all simplices $A_k(m)$ scales with order~$m^{k+1}$, placing a small bound on the practical number of feature vectors.  However the effective support of the fitted parameters $\bp$ is often a relatively small subset of $A_k(m)$.

From the perspective of algebraic topology there are questions to answer: degenerate simplices are important, but do face maps have a place in the theory?  Potentially related, how does one compare two different models fitted to the same data?  These models were inspired by algebraic topology, so what do homotopies between simplicial mixture models look like and can they be used to compare models?
% The geometry of the simplices has further potential application, for example a more advanced fitting algorithm could change the number of vertices by adding a new vertex to the centre of an existing simplex and splitting simplices appropriately.

A final observation.  There is a vector space structure on the probability simplex; this is given by identifying the simplex with a vector space via the logistic map~\cite{aitchison1980logistic}.  The space of linear maps $\R^m\rightarrow\R^n$ is also a vector space and hence the set of parameters $(\bp, V)$ of a simplicial mixture model is itself a vector space.
This means that one can define a simplicial mixture model to take values in the parameter space of a different simplicial mixture model.  For example one could use such a hierarchical model to represent not just a single digit of the MNIST dataset, but the space of all digits.  This applies generally whenever we are presented with a dataset of datasets.

\bibliographystyle{plain}
\bibliography{library}

\begin{thebibliography}{10}

\bibitem{aitchison1980logistic}
John Aitchison and Sheng~M Shen.
\newblock Logistic-normal distributions: Some properties and uses.
\newblock {\em Biometrika}, 67(2):261--272, 1980.

\bibitem{bioucas2012hyperspectral}
Jos{\'e}~M Bioucas-Dias, Antonio Plaza, Nicolas Dobigeon, Mario Parente, Qian
  Du, Paul Gader, and Jocelyn Chanussot.
\newblock Hyperspectral unmixing overview: Geometrical, statistical, and sparse
  regression-based approaches.
\newblock {\em IEEE journal of selected topics in applied earth observations
  and remote sensing}, 5(2):354--379, 2012.

\bibitem{bishop2006pattern}
Christopher~M Bishop.
\newblock {\em Pattern recognition and machine learning}.
\newblock Springer, 2006.

\bibitem{bishop1998gtm}
Christopher~M Bishop, Markus Svens{\'e}n, and Christopher~KI Williams.
\newblock {GTM}: The generative topographic mapping.
\newblock {\em Neural computation}, 10(1):215--234, 1998.

\bibitem{MIT2011fivek}
Vladimir Bychkovsky, Sylvain Paris, Eric Chan, and Fr{\'e}do Durand.
\newblock Learning photographic global tonal adjustment with a database of
  input / output image pairs.
\newblock In {\em The Twenty-Fourth IEEE Conference on Computer Vision and
  Pattern Recognition}, 2011.

\bibitem{carlsson2009topology}
Gunnar Carlsson.
\newblock Topology and data.
\newblock {\em Bulletin of the American Mathematical Society}, 46(2):255--308,
  2009.

\bibitem{cover2012elements}
Thomas~M Cover and Joy~A Thomas.
\newblock {\em Elements of information theory}.
\newblock John Wiley \& Sons, 2012.

\bibitem{cutler1994archetypal}
Adele Cutler and Leo Breiman.
\newblock Archetypal analysis.
\newblock {\em Technometrics}, 36(4):338--347, 1994.

\bibitem{desilva2004topological}
Vin De~Silva and Gunnar~E Carlsson.
\newblock Topological estimation using witness complexes.
\newblock In {\em Symposium on Point-Based Graphics, ETH, Z\"urich}, pages
  157--166, 2004.

\bibitem{griffin2019smm}
James~T Griffin.
\newblock Simplicial mixture models, https://github.com/jamesthomasgriffin/smm,
  2019.

\bibitem{hinton1992adaptive}
Geoffrey~E Hinton, Christopher~KI Williams, and Michael~D Revow.
\newblock Adaptive elastic models for hand-printed character recognition.
\newblock In {\em Advances in neural information processing systems}, pages
  512--519, 1992.

\bibitem{hoyer2004non}
Patrik~O Hoyer.
\newblock Non-negative matrix factorization with sparseness constraints.
\newblock {\em Journal of machine learning research}, 5(Nov):1457--1469, 2004.

\bibitem{mnist}
Yann LeCun, Corinna Cortes, and Christopher~JC Burges.
\newblock The {MNIST} database of handwritten digits,
  http://yann.lecun.com/exdb/mnist/.

\bibitem{parra2000unmixing}
Lucas~C Parra, Clay Spence, Paul Sajda, Andreas Ziehe, and Klaus-Robert
  M{\"u}ller.
\newblock Unmixing hyperspectral data.
\newblock In {\em Advances in neural information processing systems}, pages
  942--948, 2000.

\bibitem{tan2017decomposing}
Jianchao Tan, Jyh-Ming Lien, and Yotam Gingold.
\newblock Decomposing images into layers via rgb-space geometry.
\newblock {\em ACM Transactions on Graphics (TOG)}, 36(1):7, 2017.

\end{thebibliography}

\appendix

\section{Linearly Embedded Mixture Models}
\label{section_LEMM}
A linearly embedded mixture model is a convenient abstraction of a simplicial mixture model.  We will derive the expectation-maximisation (EM) algorithm and the intrinsic encoding rate in this generality.

The abstraction takes advantage of the fact that the central formula $X=VU_C$ in Definition~\ref{def_smm} could apply to any family of random vectors $U_S\in\R^m$ for $S$ in an indexing set $A$.
Given a distribution $\bp$ on $A$ and a linear map $V:\R^m\rightarrow \R^n$ the \emph{linearly embedded mixture model} parametrised by $(\bp,V)$ is defined by the formula $X=VU_C$.  With $Z=U_C \in \R^m$ a mixture of distributions and $X=VZ$ it is literally a linearly embedded mixture of the distributions~$U_S$.
The simplicity of the model means that existence of simple statistics and their formulae depend on existence and formulae for each $U_S$, at its most simple we have the formula
\begin{equation}
\E X = V\E Z = \sum_{S\in A} p_S V\E U_S
% \qquad\text{ and }\qquad
% \var X = V \var Z V^t
\end{equation}
and the formulae for higher central moments are also straightforward.
% We will assume that the
% If the distributions $U_S$ do not have probability density functions then typically neither will $X$ or~$Z$.

To provide a model for observed data we choose a covariance matrix $\Sigma$ and add a Gaussian term $N(0,\Sigma)$.  Then the set of parameters for the model $X+N(0,\Sigma)$ is $(\bp, V, \Sigma)$ and the probability density function is
\begin{equation}
\rho(x) = \E_{X\mid p, V} \rho_\Sigma(x-X),
\end{equation}
which exists because $\rho_\Sigma(x-X)$, the probability density function of $N(0,\Sigma)$, is a bounded function of $X$.
 % and is continuous in $x$, indeed it is Lipschitz with the same parameters as those for $\rho_\Sigma(x-X)$.

The log-likelihood of parameters $(\bp, V, \Sigma)$ for the model $X+N(0,\Sigma)$ with respect to observed data $(x_i)_{i=1}^N$ lying in $\R^n$ is
% \begin{equation}
% \rho((x_i)_{i=1}^N \mid p,V,\Sigma) =
% \prod_{i=1}^N \rho(x_i) =
% \prod_{i=1}^N \E_{X\mid p, V} \rho_\Sigma(X-x_i),
% \end{equation}
% so the log-likelihood function at $(\bp,V,\Sigma)$ is
\begin{equation}
L(\bp, V, \Sigma)
  = \log \rho((x_i)_{i=1}^N \mid p,V,\Sigma)
  % = \sum_{i=1}^N \log\rho(x_i)
  = \sum_{i=1}^N \log\E_{X\mid p,V}\rho_\Sigma(X-x_i).
\end{equation}
In all but the simplest examples this is not a function that can be calculated exactly or easily estimated.  But as we will see it can be iteratively maximised using the EM algorithm.

\subsection{Examples of linearly embedded mixture models}
\label{section_lemm_examples}
Our principle example of a linearly embedded mixture model is a simplicial mixture model, where the indexing set $A$ is a set $A_k(m)$ of simplices, and the distributions $U_S$ are described in Section~\ref{section_smm}.

The simplest example is when $A=\{1,\cdots,m\}$ and each~$U_i$ is the constant distribution with value the basis vector~$e_i\in \R^m$.  Then the linear mixture model is a discrete distribution with values the columns of $V$.  Fitting such a model is equivalent to fitting a Gaussian mixture model where the covariance matrices for the points are all equal.

When $A$ is a singleton set the mixture is trivial, consisting of a single distribution~$U$.  For example if $m < n$ and $U$ is the unit multivariate Gaussian distribution on~$\R^m$, then $X=VU$ is a multivariate Gaussian distribution with support a subspace of $\R^n$ of dimension $m$ or less.
Fitting a distribution $X + N(0, \sigma I)$ is equivalent to a form of principal component analysis where $X$ picks out the subspace spanned by the $m$ eigenvectors of the covariance matrix of the data corresponding to its $m$ greatest eigenvalues.

Another example is a generative topographic mapping (GTM)~\cite{bishop1998gtm}.  Pick $m$ points $y_1,\cdots,y_m$ in a metric space $M$ which is equipped with a probability measure.  For example $M$ could be the unit square and the $m$ points could form the corners of a grid within the square.
Let $a_i(y) = \exp(-\tfrac1{2s}d(y, y_i))$ for $i=1,\cdots,m$ and a fixed scaling parameter $s$.  Then define a map $\phi$ from $M$ into $\R^m$ by sending $y\in M$ to $\phi(y)\in \R^m$ with $\phi(y)_i = a_i(y) / \sum_{i=1}^m a_i(y)$.  Let $U$ be the distribution defined by $\phi(M)$.
Then fitting the associated linear mixture model $X=VU=V\phi(M)$ to data is equivalent to specifying vertex positions for each $i=1,\cdots,m$ and extending this to a map into~$\R^n$ taking $y$ to $\sum \phi(y)_i v_i$.
This is exactly the GTM of~\cite{bishop1998gtm}, but instead of their approximation of the EM algorithm, the methods below define a stochastic variant.

\subsection{Maximum likelihood fitting and the EM algorithm}
\label{section_lemm_EM}
The EM algorithm is an iterative approach to maximising log-likelihood, we follow the treatment of~\cite{bishop2006pattern}, Section~9.3.
Each step of the EM algorithm updates the parameters via the formula $\theta^\new = \argmax_{\theta} Q(\theta,\theta^\old)$ for a function $Q$ that we will define next. The log-likelihood of $\theta^\new$ is provably greater than the log-likelihood of $\theta^\old$.

Suppose that $\thetaold=(\bp^\old,V^\old,\Sigma^\old)$ is a set of parameters and that $\bX=(x_i)$ is our observed data.  For each data point $x_i$ and set of parameters $\theta$ there is a posterior distribution on the latent variables $C_i, Z_i$, we write $\bZ=(Z_i)$ and $\bC=(C_i)$.
The expectation of the log-likelihood function computed over the posterior distribution $\bZ$ given the parameters $\thetaold$,
\begin{equation}\label{eq_Qdefn}
  Q(\theta, \thetaold) =
    \E_{\bZ,\bC\mid \bX,\thetaold} \log \rho(\bX,\bZ,\bC \mid \theta)
\end{equation}
% \begin{align}
%   Q(\theta, \thetaold) &=
%     \E_{\bZ,\bC\mid \bX,\thetaold} \log \rho(\bX,\bZ,\bC \mid \theta) \\
  % &= -\tfrac{n}2 \log(2\pi) - \tfrac12\log|\Sigma| -
  %   \tfrac12\E(x_i-VZ)^t\Sigma^{-1}(x_i-VZ) \\
  % &= \text{constant} - \tfrac12\log|\Sigma| -
  %   \tfrac12 x_i^tx_i + \E x_i^t\Sigma VZ - \tfrac12 \E
% \end{align}
is maximised over $\theta$, to give a new set of parameters~$\theta^\new$ with increased log-likelihood.  However this relies on the existence/choice of probability density function for the joint distribution $(\bX,\bZ,\bC)$ which depends on the choice of measure for each~$U_S$.
Assuming the existence we could split the function using Bayes' Law
\begin{equation}
  \log \rho(\bX,\bZ,\bC \mid \theta) =
    \log\rho(\bX\mid\bZ,\bC,\theta) +
    \log\rho(\bZ \mid \bC, \theta) +
    \log P(\bC \mid \theta)
\end{equation}
The first and last terms exist, we write out their formulae later.  However the middle term is ambiguous as $U_S$ will not typically have a density function with respect to the Lesbegue measure on~$\R^m$.  However given $C$ the probability of $Z$ does not depend on the parameters~$\theta$.  So $\log\rho(\bZ\mid\bC,\theta) = \log\rho(\bZ\mid\bC)$ and this term would not participate in the maximisation.
This means that we can choose any measure on each $U_S$ and it does not change the maximisation of $Q$. In particular we can choose the measure on $U_S$ such that $\rho_{U_S} \equiv 1$, so the term vanishes entirely.

The probability $P(C = S \mid \theta)$ is equal to the parameter $p_S$. The distribution of $X$ given $Z, C$ and parameters $\theta$ is a multivariate Gaussian on $\R^n$ with mean $VZ$ and covariance~$\Sigma$.
Hence the formula for $Q$ becomes
\begin{equation}
  Q(\theta,\thetaold) = \sum_{i=1}^N
     \E_i\Bigl(
    -\tfrac{n}2 \log(2\pi) - \tfrac12\log|\Sigma|
    -\tfrac12 (x_i-VZ_i)^t\Sigma^{-1}(x_i-VZ_i)
    +\log p_{C_i} \Bigr)
\end{equation}
where we have written $\E_i$ for $\E_{Z_i,C_i\mid x_i,\thetaold}$.
Define the following matrices
\begin{align}
  Q_{ZZ} &= \sum_{i=1}^N \E_i Z_iZ_i^t \label{eq_QZZ}\\
  Q_{ZX} &= \sum_{i=1}^N \E_i Z_ix_i^t \label{eq_QZX}\\
  Q_{XX} &= \sum_{i=1}^N x_ix_i^t.     \label{eq_QXX}
\end{align}
and let $q_S = \sum_{i=1}^N P(C_i=S \mid x_i)$.
The matrices have dimensions $m\times m$, $m\times n$ and $n\times n$ respectively.
Then the formula for $Q(\theta,\thetaold)$ may be rewritten
\begin{equation}
  % Q(\theta,\thetaold) =
  \text{const}
  +\tfrac{N}2\log|\Sigma^{-1}|
  -\tfrac12 Q_{XX} : \Sigma^{-1}
  + Q_{ZX} : V^t\Sigma^{-1}
  -\tfrac12 Q_{ZZ} : V^t\Sigma^{-1}V
  + \sum_{S\in A}q_S\log p_S,
\end{equation}
where for matrices $A,B$ of the same shape, $A : B$ is the matrix contraction $\sum_{ij}A_{ij}B_{ij}$.
Maximising $Q(\theta,\thetaold)$ with respect to $\theta$ is now straightforward.  For the probabilities we differentiate $Q$ with respect to each $p_S$, then subject to the constraint $\sum_{S\in A}p_S = 1$, we find
\begin{equation}\label{eq_pnew}
  p_S^\new = q_S/(\sum_S q_S).
\end{equation}
Differentiating with respect to $V$ and equating to~0 we obtain the matrix equation
\begin{equation}
  Q_{ZX}\Sigma^{-1} - Q_{ZZ}V^\new\Sigma^{-1} = 0,
\end{equation}
which, after cancelling $\Sigma^{-1}$ and assuming that $Q_{ZZ}$ is invertible, yields
\begin{equation}\label{eq_Vnew}
V^\new = Q_{ZZ}^{-1}Q_{ZX}.
\end{equation}
Finally, differentiating with respect to $\Sigma^{-1}$ (recall that the derivative of $\log|\Sigma^{-1}|$ is~$\Sigma$) and rearranging, we obtain
\begin{equation}\label{eq_Sigmanew}
  \Sigma^\new = \frac1{2N}\left((V^{\new})^tQ_{ZZ}V^\new - 2(V^{\new})^tQ_{ZX} + Q_{XX}\right).
\end{equation}
If we restrict $\Sigma$ to being isotropic, i.e. $\Sigma=\sigma I$, or diagonal with coefficients $\sigma_1,\cdots,\sigma_n$, then the respective formulae are
\begin{equation}
  \sigma^\new = \frac1n \text{tr}(\Sigma^\new)
    \quad\text{ and }\quad
  \sigma_i^\new = \Sigma_{ii}^\new,
\end{equation}
where $\Sigma^\new$ is given in~\eqref{eq_Sigmanew}.

\subsection{Calculating or estimating the expected $q$-values}
The matrix $Q_{XX}$ depends only on the dataset so needs only calculating once, whereas the values $q_S$, $Q_{ZZ}$ and $Q_{ZX}$ must be calculated at each step of the EM algorithm. The matrices are sums~\eqref{eq_QZZ}, \eqref{eq_QZX} over the data $x_i$ of certain expected values over the posterior distribution of $C_i, Z_i$ given $x_i$ and parameters~$\thetaold$.
The value $q_S$ is a sum over the data $x_i$ of the expected values $q_{S,i}=\E_{U_S}\rho_\Sigma(x_i-Vu)$ this time over the prior distribution on~$U_S$.
Recall that the prior distribution on~$Z$ is a mixture of distributions $U_S$, so expected values can be expanded as
\begin{equation}
  \E_{Z\mid x_i} f(z) = \sum_{S} p_S \E_{U_S\mid x_i} f(u).
\end{equation}
Then by Bayes' Law we have
\begin{equation}
  \E_{U_S\mid x_i} f(u) =
    \frac1{q_{S,i}}
    \int f(u)\rho_\Sigma(x_i-Vu) \mu_{U_S}(u),
\end{equation}
where we have expressed the conditional density function as $\rho(x\mid C=S, U_S=u) = \rho_\Sigma(x-Vu)$.
Note that with the functions $f(u)$ used to calculate $Q_{ZZ}$ and $Q_{ZX}$, these matrices can be expressed in terms of the central moments of the posterior distribution on~$U$.
The classes of distributions $U$ for which these expected values may be computed explicitly are limited, but they include Gaussian distributions (including those with support on affine planes) on $\R^m$, finite discrete distributions, and uniform distributions on line segments.
In the case of a uniform distribution on a line segment, the posterior distributions are uni-variate truncated Gaussians embedded in $\R^m$.
% So the expected values can be calculated for line segments, but the formulae involve the error function and potential catastrophic cancellation.
% This calculation can be extended to any distribution arising from a simplex distribution with support a 1-simplex.

Performing the calculation of $q_S$, $Q_{ZZ}$ and $Q_{ZX}$ directly requires summing over every distribution $U_S$ for $S\in A$ and every datapoint $x_i$, which is slow if the cardinality of $A$ is large.
Fortunately the expected values can instead be estimated using the Metropolis algorithm once one has constructed a Markov chain defined on $C$ and $Z$ with the appropriate stationary distribution, which is a simple task for most distributions $U_S$.  See Appendix~\ref{section_mcmc} and Table~\ref{table_state_diagram} for further details.

If the expected values are only estimated and the maximisation step is performed using the estimates then this is a stochastic variant of the EM algorithm and we no longer have guarantees of monotonicly increasing log-likelihood or of convergence.
An advantage of using estimates is one of efficiency: in many applications the posterior distributions $C_i$ will have low entropy, so for any given datapoint $x_i$ most of the component distributions $U_S$ will have little probability mass in the posterior distribution.  The Markov chain Monte Carlo approach means that these components are only rarely sampled.

\subsection{Model comparison via minimal description length}
\label{section_encodingrate}
The EM algorithm maximises the log-likelihood, but calculating or estimating the log-likelihood is usually not feasible.  To compare different sets of parameters from the same model, or to compare parameters between different types of models, we defined the intrinsic encoding rate~\eqref{eq_encodingrate} of a simplicial mixture model.  We will now explain the derivation of the formula and the terms in the expression in more detail.  The general version for a linearly embedded mixture model is
\begin{equation}\label{eq_encodingrate2}
  h_R(\bp, V, \Sigma) = H(\bp) +
    \sum_{S\in A} p_S R_{VU_S}(\Sigma) + \frac12\log\left[(4\pi e)^n\det(\Sigma)\right].
\end{equation}
This is a measure of the average amount of information required to encode values of the model $X+N(0,\Sigma)$ by first encoding a value~$S\in C$, then a value of $z\in Z$ given $C=S$, before finally encoding a value $x = (x-Vz) + Vz$.
The distribution~$C$ is discrete and has entropy $H(\bp)$.
The random variable $X+N(0,\Sigma)$ is continuous and has well-defined density function, so the differential entropy is the appropriate measure of information.  However these properties can not be assumed for $Z$ and so the differential entropy cannot be used, instead we use rate distortion theory and allow $Z$ to be quantised.  To be precise, in applying rate distortion theory we do not quantise the single variable $Z$, but instead a set of i.i.d. variables drawn from~$Z$.

See Chapter~10 of~\cite{cover2012elements} for a full introduction to rate distortion theory.
A distortion function $d$ is first chosen to measure the distance between a random variable $U$ and a given finite representation~$\Uhat$.  Then the rate distortion function $R(D)$ specifies the minimum encoding rate achievable by a finite representation of the random variable, such that the expected distortion is less than the given~$D$.

For our application we choose a distortion function in such a way that the covariance matrix of the difference $VU - V\Uhat$ is dominated by~$\Sigma$.
If we define
\begin{equation}\label{eq_distortion}
  d(u, \widehat{u}) = (Vu - V\widehat{u})^t\Sigma^{-1}(Vu - V\widehat{u})
\end{equation}
then $R(1)$ is the minimum encoding rate achievable by a finite representation while satisfying the above covariance condition.
We write $R_{VU}(\Sigma) = R(1)$ to emphasise the dependence on $V$, $U$ and~$\Sigma$.  Then $Z$ can be encoded at an average rate of $\sum_S p_SR_{VU_S}(\Sigma)$ with covariance matrix of $VZ-V\Zhat$ dominated by~$\Sigma$.
With the difference $VZ-V\Zhat$ having covariance dominated by $\Sigma$, we know that the difference $Y = VZ+N(0,\Sigma) - V\Zhat$ has covariance dominated by~$2\Sigma$.
The difference distribution $Y$ is not necessarily a multivariate Gaussian, but by the maximum entropy property its differential entropy is bounded above by the entropy of $N(0, 2\Sigma)$ which is the final term of the intrinsic encoding rate.
The above derivation implies that $h_R(\bp, V, \Sigma) \geq h$ where $h$ is the differential entropy of $X + N(0, \Sigma)$.

\subsection{Discussion of the intrinsic encoding rate}
In practice computing the rate distortion functions $R_{VU}(\Sigma)$ for the component distributions $U_S$ may not be feasible.  A simple but effective alternative is to compute the rate distortion functions for the normal distribution with the same mean and covariance matrix as $U_S$, this is the method we implemented and applied in the results of Figure~\ref{fig_10digits}.

A small intrinsic encoding rate suggests a low entropy for $C$, so the mixture is concentrated on fewer components.
It suggests a low expected rate distortion which means that weighted sum of the `volumes' of the embedded mixture components $VU_S$ should be small, which discourages overlaps and discourages empty regions of the distributions far away from the data.
Finally it suggests that the determinant of the covariance should be small, so the distances squared between the model and the data should be small.
These three properties agree with intuition about what implies a good fit of the model to the data.
Ofcourse they are traded off against each other.

The intrinsic encoding rate $h_R$ is a function of parameters and does not directly depend on the data~$(x_i)_{i=1}^N$.
This can only be an appropriate measure of fit when the model parameters $(\bp, V, \Sigma)$ are stationary under the EM algorithm as applied to the dataset.
Define distributions $(C', Z')$ as follows: pick a datapoint $x_i$ uniformly from the dataset, then pick $(c, z)$ from the conditional distribution of $(C,Z)$ given $x_i$.  Under the assumption on the parameters, $C'$ is equal in distribution to~$C$.
The distribution $Z'$ can not be assumed to be equal to $Z$, however from the definition of $V$ and $\Sigma$ in the maximisation step of the EM algorithm we do know that the average covariance matrix of $VZ' - x_i$ over the dataset is equal to~$\Sigma$.
Since we do not have any concrete results about the differences in distribution between $Z$ and $Z'$ we cannot offer any formal connection between $h_R$ and the log-likelihood of the data.  This is an opportunity for further empirical research.

\section{A Markov chain Monte Carlo expectation maximisation algorithm}
\label{section_mcmc}
In Appendix~\ref{section_LEMM} the EM algorithm for a linearly embedded mixture model was derived. Each iteration is divided into two steps, the calculation of the $q$-values, $q_S$, $Q_{ZZ}$ and $Q_{ZX}$ and then the minimisation of the function $Q$ defined in~\eqref{eq_Qdefn} using the formulae~\eqref{eq_pnew},~\eqref{eq_Vnew} and~\eqref{eq_Sigmanew}.
The $q$-values are computed as a sum over the data~$x_i$, with each term an expectation over the posterior distribution of the latent variables $C$ and $Z$ given that $X+N(0,\Sigma)=x_i$.  Explicit formulae can be used only when the distributions $U_S$ are of a certain type, for example explicit formulae were used for fitting models for Figures~\ref{fig_10digits} and~\ref{fig_insideno9} since each variable $U_S$ was a uniform distribution on a line segment.

We can estimate the $q$-values if we can generate samples from the posterior distributions.  Fortunately a Markov chain approach allows for efficient sampling.  In this approach there is a Markov chain for each datapoint~$x_i$, whose state can described as a pair $z_i=(S_i, u_i)$ where $S_i\in A$ determines a choice of distribution and $u_i$ is in the support of~$U_{S_i}$.
The state is updated using the Metropolis-Hastings algorithm, for this we require a Markov chain update $M$ with stationary distribution~$Z$ to propose changes to each of the datapoint Markov chains.  So let $z'_i = (S'_i, u'_i) = M(S_i, u_i)$ be the proposed update.  We accept this update with probability
\begin{equation}
  \max\left(1,
  \frac{p(z'_i)}{p(z_i)}
  \right),
\end{equation}
where $p(z_i) = \rho_\Sigma(x_i - Vu_i)$ is the conditional probability of $z_i$ given $x_i$ and parameters $(\bp, V, \Sigma)$.
The choice of $M$ depends on the set $A$ and the distributions $U_S$.  If one has Markov chain updates $M_A$ and $M_S$ such that $M_A$ has unique stationary distribution~$\bp$ on~$A$ and each~$M_S$ the stationary distribution $U_S$, then half the time one could update the second coordinate, i.e. send $(S,u)$ to $(S, M_S(u))$ and half the time the first coordinate, so send $(S,u)$ to $(M_A(S), u')$ where $u'$ is drawn from~$U_{M_A(S)}$.

\subsection{Implementation notes for the stochastic EM algorithm}
\label{section_implementation}
For the full details of the implementation one can read the documented code~\cite{griffin2019smm}.  We will explain the core concepts, however optimisations through caching of values are not discussed.  The state at any stage of the algorithm and the four possible actions that change the state are listed in Table~\ref{table_state_diagram}.

\begin{table*}\centering
  \renewcommand{\arraystretch}{1.2}
  \caption{The state of the stochastic Markov chain EM algorithm and the possible actions on the state.}
  \begin{tabular}{@{}lp{0.8\linewidth}@{}}
    \textbf{State} & \\
    \toprule
    Parameters &
    $\theta=(\bp=(p_S)_{S=1}^M, V, \Sigma$) \\
    Simulation &
    $z_i = (S_i, u_i)$ for $i=1,\cdots,N$ \\
    Q-Values   &
    $\widehat{Q}_{ZZ}$, $\widehat{Q}_{ZX}$ and $\widehat{q}_S$ for $S=1,\cdots,M$\\
    \bottomrule
    & \\
    \textbf{Actions} & \\
    \toprule
    M-step    &   (\textit{maximisation})
      the parameters are updated using the Q-value estimates and the Q-values are reset to~$0$.  The simulation state is not changed.\\
    Q-step    &   (\textit{contribution to Q-values})
      the parameters and simulation state are used to add a term $\sum_i z_iz_i^t$ to $\widehat{Q}_{ZZ}$, a term $\sum_i z_ix_i^t$ to $\widehat{Q}_{ZX}$ and a term $|\{S_i = S\}|$ to $\widehat{q}_S$.\\
    C-step    &   (\textit{C-changing Markov step})
      for each $i$ a new candidate for $z_i$ is chosen according to $P(Z\mid \theta)$ and either accepted or rejected according to the Metropolis-Hastings criterion.\\
    U-step    &   (\textit{U-changing Markov step})
      for each $i$ the choice of component distribution~$S_i$ is retained but a candidate is chosen from the given Markov chain on $U_{S_i}$, then the Metropolis-Hastings criterion is applied.\\
    \bottomrule
  \end{tabular}
  \label{table_state_diagram}
\end{table*}

To carry out the algorithm the state is first initialised.
Typically the columns of~$V$ will be chosen at random from the data~$X$, the probabilities~$\bp$ could be chosen to be uniform and the covariance matrix chosen to be the covariance of the dataset~$X$. The simulation state is initialised by drawing from each latent variable of the model with the initial parameters $(\bp, V, \Sigma)$. The Q-values are initialised to zero.

After initialisation the four actions are carried out according to some regime: we would like to apply sufficiently many simulation steps (C and U) so that the simulation state reaches the stationary distribution of~$Z$ given~$x_i$, then more simulation steps and Q-steps must be carried out so that a reasonable estimate of the Q-values are obtained, only then should a maximisation (M) step be carried out.
However if we assume each maximisation step only changes the posterior distributions on the latent variables slightly, then fewer steps (C and U) need to be carried out to reach the stationary distribution.

For the fitting of the models of Figure~\ref{fig_unmixing} the regime used was
\begin{equation}\label{eq_regime}
  (CUQM)^{2000}((CUQ)^5M)^{1000}
\end{equation}
where the actions are carried out by reading the word left to right.
In this case the number of datapoints was large at 32041 considering the data was only three dimensional.  So with many parallel Markov chains it was judged that few steps per maximisation were required.
In this case we specified the regime in advance, but one could perform a test for convergence after every M-step and use this to terminate the algorithm.

The same code may be used to perform estimations of values over the conditional distributions of the latent variables.  For example the results of Figure~\ref{fig_unmixing} are obtained by computing the expected value of~$Z$ given every pixel in the image.  After fitting the model using the stochastic EM algorithm, more simulation steps are performed whilst calculating an average of the~$Z$ values for each data point.

\subsection{Discussion of implementation}
The stochastic EM algorithm is highly parallelisable.  The simulation state and the computations of the Q, C and U steps may all be be carried out in parallel across different CPUs/GPUs.  Performing an M step requires the accumulation of the Q-values and some simple linear algebra after which only the parameters are sent back to the individual processors.

In our implementation we make the assumption that every $U_S$ on $\R^m$ arises from a common distribution $U$ on $\R^k$, but with a different linear embedding $M_S:\R^k\rightarrow\R^m$ for each~$S\in A$.  This means that the generation of the U-step candidates are independent of the values of~$C$.

In the C-step the latent discrete variables are changed by sampling afresh from~$C$.  However if $C$ has extra combinatorial structure then different updates could be used.  For example to change a $k$-simplex $i_0\leq\cdots\leq i_k$, one could choose an index at random, remove it and then insert a new index chosen at random.  With a smaller change we would expect a higher acceptance rate when applying the Metropolis-Hastings criterion.

For many applications it is expected that~$\bp$ would have low entropy compared to the size~$M$ of its support, i.e. $2^{H(\bp)} \ll M$.  In this case the values $\widehat{q}_S$ will be sparse.  Instead of specifying~$\bp$ directly, we could replace it by a Dirichlet distribution with parameters given by~$1$ plus a sparse vector.
Whereas the actions and state of the current algorithm scale in complexity and memory use with $M$, one may hope that with such changes it would scale with $2^{H(\bp)}$.

\end{document}